\DeclareMathOperator{\li}{li}
\newtheorem{thm}{Theorem}[section]
\newtheorem{lem}{Lemma}[section]
\newtheorem{exe}{Exercise}[section]
\newcommand{\R}{\mathbb{R}}
\title{Sum of Divisors Function And The Largest Integer Function Over The Shifted Primes}
\date{}
\author{N. A. Carella}
\begin{document}

\maketitle
\textbf{\textit{Abstract}:} Let $ x\geq 1 $ be a large number, let $ [x]=x-\{x\} $ be the largest integer function, and let $ \sigma(n)$ be the sum of divisors function. This note presents the first proof of the asymptotic formula for the average order $ \sum_{p\leq x}\sigma([x/p])=c_0x\log \log x+O(x) $ over the primes, where $c_0>0$ is a constant. More generally, $ \sum_{p\leq x}\sigma([x/(p+a)])=c_0x\log \log x+O(x) $ for any fixed integer $a$.   
\let\thefootnote\relax\footnote{\today  \\
\textit{MSC2020}: Primary 11N37, Secondary 11N05. \\
\textit{Keywords}: Arithmetic function; Sum of divisors function; Average orders.}
\tableofcontents

\section{Introduction} \label{s0800}
A series of results have been proved for the finite sums $\sum_{n\leq x}f\left ([x/n]\right )$ over the integers, see \cite{BS2018}, \cite{CN2021}, \cite{ZW2021}, et alii. This note introduces the analytic techniques for evaluating the finite sums $ \sum_{p\leq x}f([x/p]) $ over the primes. This elementary methods ably handle these finite sums for multiplicative functions $f$ defined by Dirichlet convolutions 
$f(n)=\sum_{d\mid n}g(n/d)$, where $f,g: \mathbb{N}\longrightarrow \mathbb{C}$, and the rates of growth, approximately $ f(n)\gg n(\log n)^b $, for some $ b\in \mathbb{Z} $. These elementary methods are efficient, produce very short proofs, and sharp error terms. The asymptotic formula for the fractional sum $ \sum_{p\leq x}\varphi([x/p]) $ is assembled in \cite{CN2021B}, and the asymptotic formula for the fractional sum $ \sum_{p\leq x}\sigma([x/p]) $ of the sum of divisors function $  \sigma$ in Theorem \ref{thm0505.010}, is evaluated here. This is a new result in the literature.

\section{Sum of Divisors Function Over The Primes}\label{S0505}
The result in this section deals with the sum of divisors function $ \sigma(n)=n\sum_{d\mid n}1/d $ composed with the largest integer function $[z]=z-\{z\}$. The function $\sigma: \mathbb{N}\longrightarrow \mathbb{N}$ is multiplicative and satisfies the growth condition $ \sigma(n)\gg n $. The first asymptotic formula for the fractional sum of divisor function over the primes is given below.
\begin{thm}\label{thm0505.010} If $ x\geq 1 $ is a large number, then, 
\begin{equation}\label{eq0505.010}
\sum_{p\leq x}\sigma\left (\left [\frac{x}{p}\right ]\right )= c_{0}x\log \log x+c_{1}x+O\left (\li(x)\log \log x\right )\nonumber,
\end{equation}
where $c_{0}=\zeta(2)$, and $c_{1}=B_1\zeta(2)$ are constants.
\end{thm}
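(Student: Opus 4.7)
The strategy is to apply the identity $\sigma(n)=n\sum_{d\mid n}1/d$ and Abel summation together with Mertens' theorem, extracting the main term from the average value $\sum_{n\leq y}\sigma(n)/n=\zeta(2)y+O(\log y)$. Setting $s(n):=\sigma(n)/n$ and writing $s(n)=\zeta(2)+r(n)$, the partial sums satisfy $\sum_{n\leq y}r(n)=O(\log y)$, so that
$$\sum_{p\leq x}\sigma([x/p])=\zeta(2)\sum_{p\leq x}[x/p]+\sum_{p\leq x}[x/p]\,r([x/p]).$$

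For the first sum, $\sum_{p\leq x}[x/p]=x\sum_{p\leq x}1/p-\sum_{p\leq x}\{x/p\}=x\sum_{p\leq x}1/p+O(\pi(x))$, and Mertens' theorem $\sum_{p\leq x}1/p=\log\log x+B_1+O(1/\log x)$ yields
$$\zeta(2)\sum_{p\leq x}[x/p]=\zeta(2)x\log\log x+\zeta(2)B_1x+O(\li(x)),$$
producing both advertised leading constants.

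The main obstacle is controlling the error term $\sum_{p\leq x}[x/p]\,r([x/p])$. The naive bound $|r(n)|\ll\log\log n$ only gives $O(x(\log\log x)^2)$, which is far too large. To exploit cancellation, I would use the structural identity
$$r(n)=\sum_{d\geq 1}\frac{1}{d}\Bigl([d\mid n]-\frac{1}{d}\Bigr),$$
in which each inner function has bounded partial sums $\sum_{n\leq y}([d\mid n]-1/d)=O(1)$. Interchanging the order of summation converts the error into
$$\sum_{d\geq 1}\frac{1}{d}\Bigl(\sum_{\substack{p\leq x\\ d\mid[x/p]}}[x/p]-\frac{1}{d}\sum_{p\leq x}[x/p]\Bigr),$$
where both inner prime sums can be evaluated by decomposing the condition $d\mid[x/p]$ into the disjoint intervals $p\in(x/(dk+1),x/(dk)]$ and applying Mertens' theorem (and the prime number theorem) to each; the series $\sum_d1/d^2=\zeta(2)$ then provides a key cancellation among the leading-order contributions. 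The range $d>\sqrt{x}$ is handled separately using the crude bound $[x/p]\leq\sqrt{x}$ there, which together with Gronwall's bound $\sigma(n)/n\ll\log\log n$ introduces the extra $\log\log x$ factor into the final error, giving $O(\li(x)\log\log x)$ overall.
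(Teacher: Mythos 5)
Your decomposition is, once the notation is unwound, the same as the paper's: writing the divisor condition as $[d\mid m]=\tfrac1d+\bigl([d\mid m]-\tfrac1d\bigr)$ and summing $\tfrac1d$ over $d$ is exactly the paper's split of \eqref{eq0505.030} into $S(x)$ (the $a=0$ term of the additive--character detector of Lemma \ref{lem0802.050}) and $T(x)$ (the nontrivial terms), and your main-term computation $\zeta(2)\sum_{p\le x}[x/p]=\zeta(2)x\log\log x+B_1\zeta(2)x+O(\li(x))$ agrees with Lemma \ref{lem0507.100}. The gap is precisely at the point you flag as the main obstacle: the error term is only sketched, and the sketch cannot be completed. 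Your plan to evaluate $\sum_{p\le x,\ d\mid[x/p]}[x/p]$ by applying Mertens' theorem and the prime number theorem on the intervals $p\in(x/(dk+1),x/(dk)]$ breaks down for $p\le\sqrt{x}$, where these intervals have length less than $1$ and contain at most one prime; there is no averaging over such $p$, and the purported cancellation against $\tfrac{1}{d^2}\sum_{p\le x}[x/p]$ says nothing about their contribution, which is governed by the individual fluctuations of $\sigma([x/p])-\zeta(2)[x/p]$.

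Decisively, the bound you are aiming for is false, so no rearrangement will produce it. Summing your error term over all $d$ for the single prime $p=2$ reconstitutes $[x/2]\,r([x/2])=\sigma([x/2])-\zeta(2)[x/2]$ up to $O(1)$. By Gronwall's theorem $\limsup_{n}\sigma(n)/(n\log\log n)=e^{\gamma}$, so along the sequence of $x$ for which $[x/2]$ is superabundant this single term is at least $(e^{\gamma}-o(1))\tfrac{x}{2}\log\log x\approx 0.89\,x\log\log x$, while the remaining primes contribute at least $\sum_{3\le p\le x}[x/p]\ge x\log\log x-O(x)$ since $\sigma(n)\ge n$. Hence $\sum_{p\le x}\sigma([x/p])\ge(1+e^{\gamma}/2-o(1))\,x\log\log x$ infinitely often, and $1+e^{\gamma}/2\approx1.89>\zeta(2)\approx1.64$, which is incompatible with the asserted asymptotic $\zeta(2)x\log\log x+c_1x+O(\li(x)\log\log x)$. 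The error term in the statement must be at least of the order of the main term for worst-case $x$. For what it is worth, the paper's own treatment of $T(x)$ (Lemmas \ref{lem0509.200}--\ref{lem0509.400}) founders on the same rock: it treats $m=[x/p]$ as a single symbol detached from the summation variables and applies the multiplicity count $\pi([x/n])-\pi([x/(n+1)])$ uniformly, which is only meaningful for $p>\sqrt{x}$; so you should not calibrate your attempt against it.
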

\begin{proof} Use the identity $ \sigma(n)=n\sum_{d\mid n}1/d $ to rewrite the finite sum, and switch the order of summation:
\begin{eqnarray}\label{eq0505.020}
\sum_{p\leq x}\sigma\left (\left [\frac{x}{p}\right ]\right )
&=& \sum_{p\leq x} \left [\frac{x}{p}\right]\sum_{d\,\mid\, [x/p]}\frac{1}{d} \\
&=& \sum_{d\leq x} \frac{1}{d}\sum_{\substack{p\leq x\\d\,\mid\, [x/p]}}\left [\frac{x}{p}\right] \nonumber.
\end{eqnarray}
Apply Lemma \ref{lem0802.050} to remove the congruence on the inner sum index, and break it up into two subsums. Specifically,
\begin{eqnarray}\label{eq0505.030}
\sum_{d\leq x} \frac{1}{d}\sum_{\substack{p\leq x\\d\,\mid\, [x/p]}}\left [\frac{x}{p}\right]
&=&\sum_{d\leq x} \frac{1}{d}\sum_{p\leq x}\left [\frac{x}{p}\right]\cdot \frac{1}{d}\sum_{0\leq a\leq d-1}e^{i2\pi a[x/p]/d} \\
&=&\sum_{d\leq x} \frac{1}{d^2}\sum_{p\leq x}\left [\frac{x}{p}\right]+\sum_{d\leq x} \frac{1}{d^2}\sum_{p\leq x}\left [\frac{x}{p}\right]\sum_{0< a\leq d-1}e^{i2\pi a[x/p]/d}\nonumber \\
&=&S(x)\quad +\quad T(x)\nonumber.
\end{eqnarray}
The first sum $S(x)$ is computed in Lemma \ref{lem0507.100} and the sum $T(x)$ is computed in Lemma \ref{lem0509.200}. Summing these expressions 
\begin{eqnarray}
\sum_{p\leq x}\sigma\left (\left [\frac{x}{p}\right ]\right )
&=&S(x)+T(x)\\
&=&c_{0}x\log \log x+c_{1}x+c_{2}\li(x)+O\left (xe^{-c\sqrt{\log x}}\right )+O\left (\li(x)\log \log x\right )\nonumber\\
&=&c_{0}x\log \log x+c_{1}x+O\left (\li(x)\log \log x\right )\nonumber,
\end{eqnarray}
where $c_0=\zeta(2)$, and $c_1=B_1\zeta(2)$, $c_2=(1-\gamma)\zeta(2)$ are constants, and $c>0$ is an absolute constant.
\end{proof}
The constants occurring in the above expression are the followings.
\begin{enumerate}
\item $\displaystyle \zeta(2)=\frac{\pi^2}{6}=1.644934066848226436472415\ldots$, the zeta constant,
\item $\displaystyle B_1=\lim_{x\to \infty}\frac{1}{x}\sum_{p\leq x}\frac{1}{p}-\log \log x=0.261497212847642783755426\ldots $, Mertens constant,
\item $\displaystyle\gamma=\lim_{x\to \infty}\frac{1}{x}\sum_{n\leq x}\frac{1}{n}- \log x=0.577215664901532860606512\ldots$, Euler constant,
\item $\displaystyle c_1=B_1\zeta(2)=0.430145673798949331799597\ldots$, 
\item $\displaystyle c_2=(1-\gamma)\zeta(2)=0.695452355733244911926851\ldots$.
\end{enumerate} 

The standard proof for the average order over the primes
\begin{equation}\label{eq0505.040}
\sum_{p\leq x}\sigma(p)= c_3\li(x^2)+O\left (  x^2 e^{-c\sqrt{\log x}}\right ),
\end{equation}
where $c>0$ is an absolute constant, this follows from the prime number theorem, \cite[Theorem 6.9]{MV2007}, and partial summation. And the standard proof for the average order over the shifted primes
\begin{equation}\label{eq0505.050}
\sum_{p\leq x}\sigma(p-1)= \frac{315 \zeta(3)}{2\pi^4}x+O\left (  \frac{x}{(\log x)^{0.999}}\right )
\end{equation}
was proved in \cite{LY1961}.

\section{The Sum $S(x)$}\label{S0507}
The detailed and elementary evaluation of the asymptotic formula for the finite sum $S(x)$ occurring in \eqref{eq0505.030} are recorded in this section. Both conditional and unconditional results are provided. For a real number $z\in \R$, the largest integer function is defined by $[z]=z-\{z\}$ 

\begin{lem}\label{lem0507.100} Let $ x\geq 1 $ be a large number. Then,
\begin{equation}\label{eq0507.110}
\sum_{d\leq x} \frac{1}{d^2}\sum_{p\leq x}\left [\frac{x}{p}\right]
= c_{0}x\log \log x+c_{1}x+c_{2}\li(x)+O\left (xe^{-c\sqrt{\log x}}\right ),
\end{equation}
where $c_{0}=\zeta(2)$, and $c_{1}=B_1\zeta(2)$, $c_{2}=(1-\gamma)\zeta(2)$ are constants, and $c>0$ is an absolute constant.
\end{lem}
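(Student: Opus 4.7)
The crucial observation is that the inner sum $\sum_{p\leq x}[x/p]$ carries no dependence on $d$, so the double sum factorizes. I would write
\[
\sum_{d\leq x}\frac{1}{d^{2}}\sum_{p\leq x}\left[\frac{x}{p}\right]
=\left(\sum_{d\leq x}\frac{1}{d^{2}}\right)V(x),
\qquad
V(x):=\sum_{p\leq x}\left[\frac{x}{p}\right],
\]
and handle the two factors independently. From the tail bound $\sum_{d>x}d^{-2}\ll 1/x$ one has $\sum_{d\leq x}d^{-2}=\zeta(2)+O(1/x)$; since $V(x)=O(x\log\log x)$, the contribution of this $O(1/x)$ error is only $O(\log\log x)$, absorbed by the target $O(xe^{-c\sqrt{\log x}})$. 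The entire content of the lemma then lies in the asymptotic expansion of $V(x)$.

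To analyse $V(x)$ I would split $[x/p]=x/p-\{x/p\}$. The linear part is controlled by the sharp Mertens formula that follows from the prime number theorem with the de la Vall\'ee Poussin zero-free region (partial summation against $\pi(t)=\li(t)+O(te^{-c\sqrt{\log t}})$):
\[
x\sum_{p\leq x}\frac{1}{p}=x\log\log x+B_{1}x+O\!\left(xe^{-c\sqrt{\log x}}\right).
\]
For the fractional part I would swap the order of summation to obtain the identity $V(x)=\sum_{k\leq x}\pi(x/k)=\sum_{n\leq x}\omega(n)$ and apply the Dirichlet hyperbola method at the balancing point $k=\sqrt{x}$:
\[
V(x)=\sum_{k\leq\sqrt{x}}\pi(x/k)+\sum_{p\leq\sqrt{x}}\left[\frac{x}{p}\right]-\pi(\sqrt{x})[\sqrt{x}].
\]
Into the first sum I would substitute PNT; on the range $k\leq\sqrt{x}$ one has $\log(x/k)\geq \tfrac{1}{2}\log x$, so the aggregate PNT error sums to $O(xe^{-c\sqrt{\log x}})$ once one weighs against $\sum_{k\leq\sqrt{x}}1/k$. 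The main term $\sum_{k\leq\sqrt{x}}\li(x/k)$ is then evaluated by Abel summation, using $\sum_{k\leq\sqrt{x}}1/k=\tfrac{1}{2}\log x+\gamma+O(x^{-1/2})$, which is precisely where the Euler constant $\gamma$ enters. The second sum is Mertens on $[1,\sqrt{x}]$, contributing $x\log\log\sqrt{x}=x\log\log x-x\log 2$; the $-x\log 2$ cancels against a matching $+x\log 2$ boundary contribution from the $\li$-sum, leaving the coefficient of $\li(x)$ as $(1-\gamma)$. Collecting the pieces yields
\[
V(x)=x\log\log x+B_{1}x+(1-\gamma)\li(x)+O\!\left(xe^{-c\sqrt{\log x}}\right),
\]
and multiplying by $\zeta(2)+O(1/x)$ identifies $c_{0}=\zeta(2)$, $c_{1}=B_{1}\zeta(2)$, $c_{2}=(1-\gamma)\zeta(2)$.

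The main technical obstacle is the hyperbola step: one must maintain the PNT error $O(xe^{-c\sqrt{\log x}})$ uniformly under both the substitution $\pi\mapsto\li$ and the subsequent Abel summation of $\sum_{k\leq\sqrt{x}}\li(x/k)$, and one must carefully balance the $\log 2$ boundary contributions coming out of the two halves of the split so that the coefficient of $\li(x)$ collapses to exactly $(1-\gamma)$. Everything else---the tail estimate for $\zeta(2)$, the Mertens inputs, and the final multiplication---is routine bookkeeping.
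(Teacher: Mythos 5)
Your proposal is correct in outline and its first half coincides with the paper's proof: both factor the double sum as $\bigl(\sum_{d\leq x}d^{-2}\bigr)\sum_{p\leq x}[x/p]$, estimate the $d$-sum by the tail bound (your version, $\zeta(2)+O(1/x)$, is in fact the correct one --- the paper's display misprints it as $1/\zeta(2)+O(1/x)$ while still using the constant $\zeta(2)$), and split $[x/p]=x/p-\{x/p\}$, handling the linear part by the strong form of Mertens' theorem. The two arguments diverge only on the remaining piece: the paper simply \emph{cites} the asymptotic $\sum_{p\leq x}\{x/p\}=(1-\gamma)\li(x)+O(xe^{-c\sqrt{\log x}})$ (attributing it to Pillichshammer and to an exercise in Montgomery--Vaughan), whereas you derive the full expansion of $V(x)=\sum_{p\leq x}[x/p]=\sum_{n\leq x}\omega(n)$ from scratch by the Dirichlet hyperbola method at $k=\sqrt{x}$. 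Your route is more self-contained and actually supplies the one nontrivial input the paper leaves to a reference; the cost is the bookkeeping you yourself identify (propagating the PNT error through $\sum_{k\leq\sqrt{x}}\li(x/k)$, the cancellation of the $\pm x\log 2$ boundary terms, and the cross term $\pi(\sqrt{x})[\sqrt{x}]\sim 2x/\log x$, which is of the same order as the $\li(x)$ term and must be tracked to the end). One detail to fix: since $\{x/p\}\geq 0$, the decomposition $[x/p]=x/p-\{x/p\}$ forces the $\li(x)$ coefficient in $V(x)$ to be $-(1-\gamma)$, not $+(1-\gamma)$ as in your final display; the lemma as stated in the paper carries the same sign ambiguity (its own intermediate equation ends with a minus sign and a mislabelled constant), and the discrepancy is harmless for the main theorem only because the $\li(x)$ term is ultimately absorbed into the $O(\li(x)\log\log x)$ error.
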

\begin{proof}Expand the bracket and evaluate the two subsums. Specifically,
\begin{eqnarray}\label{eq0507.120}
S(x)&=&\sum_{d\leq x} \frac{1}{d^2}\sum_{n\leq x}\left [\frac{x}{p}\right]\\
&=&x\sum_{d\leq x} \frac{1}{d^2}\sum_{p\leq x}\frac{1}{p}-\sum_{d\leq x} \frac{1}{d^2}\sum_{p\leq x}\left \{\frac{x}{p}\right\}\nonumber\\
&=&S_0(x)-S_1(x)\nonumber. 
\end{eqnarray}

Substituting the standard asymptotic for the zeta constant $\sum_{n\leq x} 1/n^2$, and the prime harmonic sum $\sum_{p\leq x} 1/p$, return the followings.
\begin{eqnarray}\label{eq0507.130}
S_0(x)&=&x\sum_{d\leq x} \frac{1}{d^2}\sum_{p\leq x}\frac{1}{p}\\
&=&x\left ( \frac{1}{\zeta(2)}+O\left (\frac{1}{x}\right )\right )\left ( \log \log x+B_1+O\left (e^{-c\sqrt{\log x}}\right )\right )\nonumber\\
&=& c_{0}x\log \log x+c_{1} x  +O\left (xe^{-c\sqrt{\log x}}\right )\nonumber,
\end{eqnarray} 
where $ B_1>0 $ is Mertens constant, $c>0$ is an absolute constant, $c_{0}=\zeta(2)$, and $c_{1}=B_1\zeta(2)$.\\

Likewise, substituting the standard asymptotics for the zeta constant $\sum_{n\leq x} 1/n^2$, and the prime number theorem for fractional parts sum $\sum_{p\leq x} [x/p]$, see \cite{PF2010}, \cite[Exercise 1g, p.\ 248]{MV2007}, return the followings.
\begin{eqnarray}\label{eq0507.140}
S_1(x)&=&\sum_{d\leq x} \frac{1}{d^2}\sum_{p\leq x}\left \{\frac{x}{p}\right\}\\
&=& \left ( \frac{1}{\zeta(2)}+O\left (\frac{1}{x}\right )\right )\left ( (1-\gamma)\li(x)+O\left (xe^{-c\sqrt{\log x}}\right )\right )\nonumber\\
&=& c_{2}\li(x)+O\left (xe^{-c\sqrt{\log x}}\right )\nonumber,
\end{eqnarray} 
where $\li(x)$ is the logarithm integral, $\gamma$ is the Euler constant, $c_{2}=(1-\gamma)\zeta(2)$, and $c>0$ is an absolute constant.
Subtracting the expressions \eqref{eq0507.130} and \eqref{eq0507.140} yields
\begin{equation}\label{eq0507.150} 
S_(x)=S_0(x)-S_1(x)=c_{0}x\log \log x+c_{1} x - c_{0}\li(x)+O\left (xe^{-c\sqrt{\log x}}\right ),
\end{equation}
where $c_{0}=\zeta(2)$, and $c_{1}=B_1\zeta(2)$, $c_{2}=(1-\gamma)\zeta(2)$, and $c>0$ is an absolute constant.
\end{proof}

\begin{lem}\label{lem0507.250} Assume the RH. Let $ x\geq 1 $ be a large number. Then,
\begin{equation}\label{eq0507.210}
\sum_{d\leq x} \frac{1}{d^2}\sum_{p\leq x}\left [\frac{x}{p}\right]
= c_{0}x\log \log x+c_{1}x+c_{2}\li(x)+O\left (x(\log x)^2\right ),
\end{equation}
where $c_{0}=\zeta(2)$, and $c_{1}=B_1\zeta(2)$, $c_{2}=(1-\gamma)\zeta(2)$ are constants.
\end{lem}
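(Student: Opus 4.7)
The plan is to repeat the argument of Lemma \ref{lem0507.100} verbatim, keeping the decomposition $S(x)=S_0(x)-S_1(x)$ from \eqref{eq0507.120}, but feeding in Riemann Hypothesis--conditional prime counts in place of the zero-free region estimates. The elementary input $\sum_{d\leq x}d^{-2}=\zeta(2)+O(1/x)$ is unaffected by RH and is used unchanged.

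For $S_0(x)=x\sum_{d\leq x}d^{-2}\sum_{p\leq x}1/p$, I would apply Abel summation to von Koch's conditional bound $\pi(x)=\li(x)+O(\sqrt{x}\log x)$ to sharpen Mertens' second theorem to
\[
\sum_{p\leq x}\frac{1}{p}=\log\log x+B_1+O\!\left(\frac{(\log x)^{2}}{\sqrt{x}}\right).
\]
Multiplying by $x(\zeta(2)+O(1/x))$ immediately yields $S_0(x)=c_0 x\log\log x+c_1 x+O(\sqrt{x}(\log x)^{2})$, which is absorbed in $O(x(\log x)^{2})$. For $S_1(x)=\sum_{d\leq x}d^{-2}\sum_{p\leq x}\{x/p\}$, the missing ingredient is the RH-conditional analogue of the fractional part asymptotic used in \eqref{eq0507.140}, namely
\[
\sum_{p\leq x}\left\{\frac{x}{p}\right\}=(1-\gamma)\li(x)+O\!\left(x(\log x)^{2}\right).
\]
To obtain this I would retrace the proof of \cite{PF2010} and \cite[Exercise 1g, p.~248]{MV2007} by writing Pillai's identity $\sum_{p\leq x}\{x/p\}=x\sum_{p\leq x}1/p-\sum_{n\leq x}\omega(n)$ and feeding the conditional estimate for $\pi(t)-\li(t)$ into the partial summations on both sides, so that the error becomes dictated by $\sqrt{t}\log t$-type integrals rather than by the de la Vallée Poussin zero-free region. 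Multiplying by $\zeta(2)+O(1/x)$ preserves the size of the error, giving $S_1(x)=c_2\li(x)+O(x(\log x)^{2})$, and subtracting the two pieces assembles \eqref{eq0507.210}.

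The main obstacle is the second step: faithfully reproducing the proof of the fractional part asymptotic under RH and verifying that the cumulative error, after integration against $dt/t^{2}$ and after multiplication by the outer factor $x$, stays within $O(x(\log x)^{2})$. The subtlety is that naive substitution of Koch's bound into an expression that is already of size $x$ can only afford a $\sqrt{x}$--saving, and one must check that no secondary $\log$-powers creep in beyond the second power. Once the conditional fractional part estimate is established, the remaining arithmetic is identical to the unconditional case treated in Lemma \ref{lem0507.100}.
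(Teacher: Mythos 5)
Your proposal is correct and takes essentially the same approach as the paper, whose entire proof of this lemma is the one-line remark that the argument of Lemma \ref{lem0507.100} is repeated with the unconditional error terms replaced by their RH-conditional counterparts; you simply supply the details (von Koch's bound, the conditional Mertens sum, the conditional fractional-part asymptotic) that the paper leaves implicit. One further observation: the stated error $O\left(x(\log x)^2\right)$ already dominates every main term, so the $S_1(x)$ step you flag as the main obstacle is trivially satisfied as written; the bound was presumably meant to be $O\left(\sqrt{x}(\log x)^2\right)$, which your treatment of $S_0(x)$ attains but which would indeed require the conditional fractional-part estimate you describe for $S_1(x)$.
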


\begin{proof} Everything remain the same as the previous proof. However, the unconditional error term is replaced with the conditional error term.
\end{proof}

\section{The Sum $T(x)$}\label{S0509}
The detailed evaluation of the asymptotic formula for the finite sum $T(x)$ occurring in \eqref{eq0505.030} are recorded in this section. For a real number $z\in \R$, the fractional function is defined by $\{z\}=z-[z]$.
\begin{lem}\label{lem0509.200} Let $ x\geq 1 $ be a large number. Then,
\begin{equation}\label{eq0509.210}
\sum_{d\leq x} \frac{1}{d^2}\sum_{p\leq x}\left [\frac{x}{p}\right]\sum_{0< a\leq d-1}e^{i2\pi a[x/p]/d}
=O\left (\li(x)\log \log x\right )\nonumber.
\end{equation}
\end{lem}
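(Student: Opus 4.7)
The plan is to collapse the inner exponential sum by orthogonality and then control the resulting arithmetic discrepancy via the Prime Number Theorem. First, I would apply the identity $\sum_{a=1}^{d-1}e^{i2\pi am/d} = d\,\mathbf{1}[d\mid m] - 1$, valid for every positive integer $m$, to swap the order of summation and rewrite
\begin{equation*}
T(x) \;=\; \sum_{d\leq x}\frac{d\,A(d) - B}{d^{2}}, \qquad A(d):=\sum_{\substack{p\leq x\\ d\mid [x/p]}}[x/p], \quad B:=\sum_{p\leq x}[x/p].
\end{equation*}
Since Lemma \ref{lem0507.100} and its ingredients already supply $B = x(\log\log x + B_1) + O(\li(x))$, this reduces the estimate to showing that $|dA(d) - B|$ is sufficiently small uniformly in $d$.

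Next I would parametrize $A(d)$ by the value $[x/p] = kd$, equivalently $p \in (x/(kd{+}1),\, x/(kd)]$, obtaining
\begin{equation*}
A(d) \;=\; \sum_{k\geq 1}kd\bigl(\pi(x/(kd)) - \pi(x/(kd+1))\bigr).
\end{equation*}
Applying the PNT with de la Vall\'ee Poussin error term on each subinterval and then partial summation in $k$ should yield an asymptotic of the form $A(d) = B/d + O\bigl(\li(x)\log\log x / d\bigr)$ uniformly for $d \leq x^{1/2}$. For $d > x/2$ one has $A(d) = 0$ directly, since $d\mid [x/p]$ with $[x/p]\geq 1$ forces $p \leq x/d < 2$; the contribution of that range is therefore just $\sum_{d>x/2} B/d^{2} \ll \log\log x$.

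Combining these bounds would give $|dA(d) - B| \ll \li(x)\log\log x$ uniformly in $d$, and then
\begin{equation*}
|T(x)| \;\leq\; \sum_{d\leq x}\frac{|dA(d)-B|}{d^{2}} \;\ll\; \li(x)\log\log x\cdot\zeta(2) \;=\; O\bigl(\li(x)\log\log x\bigr),
\end{equation*}
as claimed.

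The principal obstacle is the uniform discrepancy bound for $A(d)$ in the intermediate range $x^{1/2} < d \leq x/2$: the short intervals $(x/(kd+1),\, x/(kd)]$ fall outside the range of direct PNT estimates, and small primes such as $p=2, 3, 5$ can produce anomalous spikes (for instance $A(x/2) = [x/2]$ when $x$ is even) that must be controlled either by Brun--Titchmarsh applied dyadically in both $k$ and $d$, or by averaging $A(d)$ over short ranges of $d$ to smooth out the irregularities without losing the $1/\log x$ saving that distinguishes $\li(x)\log\log x$ from the main term $x\log\log x$.
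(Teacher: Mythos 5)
Your orthogonality reduction and the identity $T(x)=\sum_{d\le x}d^{-2}\bigl(dA(d)-B\bigr)$ are correct, and the treatment of $d>x/2$ is fine, but the step that carries all the weight --- the uniform asymptotic $A(d)=B/d+O(\li(x)\log\log x/d)$ for $d\le x^{1/2}$ --- does not follow from applying the Prime Number Theorem to the subintervals $(x/(kd+1),\,x/(kd)]$. Those intervals have length $x/(kd(kd+1))$, which drops below $1$ as soon as $kd>\sqrt{x}$, so interval-by-interval PNT says nothing there; and that is exactly the range that matters. Indeed $kd>\sqrt{x}$ corresponds to the primes $p\le\sqrt{x}$, and since $\sum_{p\le\sqrt{x}}1/p=\log\log x+O(1)$ while $\sum_{\sqrt{x}<p\le x}1/p=\log 2+o(1)$, the entire main term $\tfrac{x}{d}\log\log x$ of both $A(d)$ and $B/d$ is generated by $p\le\sqrt{x}$. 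So the obstacle you flag is not confined to $x^{1/2}<d\le x/2$: it is already present for $d=2$, where $2A(2)-B=\sum_{p\le x}(-1)^{[x/p]}[x/p]$ and you must extract a $1/\log x$ saving from the oscillation of $[x/p]\bmod 2$ over primes $p\le\sqrt{x}$. What is missing is an equidistribution statement for $[x/p]\bmod d$ over small primes, uniform in $d$, with a logarithmic saving; Brun--Titchmarsh gives upper bounds of the right order but cannot produce the cancellation in $dA(d)-B$, and averaging over $d$ forfeits the uniformity that your final summation $\sum_{d}d^{-2}|dA(d)-B|$ requires. As written, the proposal therefore has a genuine gap at its central step.

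For comparison, the paper takes the dual route: it fixes the fibre $m=[x/p]$ first and executes the $d$-sum in closed form, $\sum_{d\le x}d^{-2}\bigl(d\,\mathbf{1}[d\mid m]-1\bigr)=\sigma(m)/m-\zeta(2)+O(1/x)$, which is bounded in absolute value by $2\log\log x$; this avoids any uniformity in $d$ and shifts the entire burden onto counting the primes in each fibre, i.e.\ onto $\pi([x/n])-\pi([x/(n+1)])$. That trade is worth understanding: it shows the real content of the lemma is the average behaviour of $\sigma(m)/m-\zeta(2)$ along the sequence $m=[x/p]$, not a divisor-by-divisor discrepancy, and any repair of your argument will have to import that information in some form.
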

\begin{proof} Let $\pi(x)=\#\{\text{prime } p\leq x\}$ be the primes counting function, let $\li(x)$ be the logarithm integral, and let $p_k$ be the $k$th prime in increasing order. The sequence of values
\begin{equation}\label{eq0509.225}
\left [\frac{x}{p_{k}}\right]=\left [\frac{x}{p_{k+1}}\right]=\cdots =\left [\frac{x}{p_{k+r}}\right]
\end{equation}
arises from the sequence of primes $x/(n+1)\leq p_{k}, p_{k+1}, \ldots, p_{k+r}\leq x/n$. Therefore, the value $m=[x/p]\geq1$ is repeated 
\begin{equation}\label{eq0509.220}
\pi\left (\left [\frac{x}{n}\right]\right)-\pi\left (\left [\frac{x}{n+1}\right]\right)=\frac{\li(x)}{n(n+1)} +O\left (\frac{x}{n}e^{-c\sqrt{\log x}}\right ) 
\end{equation}
times as $p$ ranges over the prime values in the interval $[x/(n+1), x/n]$, see Exercise \ref{exe0303.100} in Section \ref{S0303}. Hence, substituting \eqref{eq0509.220} into the triple sum $T(x)$, and reordering it yield
\begin{eqnarray}\label{eq0509.230}
T(x)&=&\sum_{p\leq x}\left [\frac{x}{p}\right]\sum_{d\leq x} \frac{1}{d^2}\sum_{0< a\leq d-1}e^{i2\pi m/d}\\
&=&\sum_{p\leq x}\left ( \frac{\li(x)}{n(n+1)}+O\left (\frac{x}{n}e^{-c\sqrt{\log x}}\right )\right)\sum_{d\leq x} \frac{1}{d^2}\sum_{0< a\leq d-1}e^{i2\pi am/d}\nonumber\\
&=&\li(x)\sum_{n\leq x}\frac{1}{n(n+1)}\sum_{d\leq x} \frac{1}{d^2}\sum_{0< a\leq d-1}e^{i2\pi am/d}\nonumber\\
&& \hskip 1.5 in + O\left (xe^{-c\sqrt{\log x}}\sum_{n\leq x} \frac{1}{n}\sum_{d\leq x} \frac{1}{d^2}\sum_{0< a\leq d-1}e^{i2\pi am/d}\right)\nonumber\\ 
&=&T_{0}(x)+T_{1}(x)\nonumber.
\end{eqnarray}
The finite subsums $T_{0}(x)$ computed in Lemma \ref{lem0509.300}, and $T_{1}(x)$, computed in Lemma \ref{lem0509.400}. Summing yields 
\begin{eqnarray}\label{eq0509.240}
T(x)&=&T_{0}(x)+T_{1}(x)= O\left (\li(x)\log \log x\right )+O\left (xe^{-c\sqrt{\log x}}\right )\\
&=&O\left (\li(x)\log \log x\right )\nonumber,
\end{eqnarray}
where $c>0$ is an absolute constant.
\end{proof}

\subsection{The Sum $T_{0}(x)$}
\begin{lem}\label{lem0509.300} Let $ x\geq 1 $ be a large number. Then,
\begin{equation}\label{eq0507.310}
\li( x)\sum_{n\leq x}\frac{1}{n(n+1)}\sum_{d\leq x} \frac{1}{d^2}\sum_{0< a\leq d-1}e^{i2\pi am/d}
= O\left (\li(x)\log \log x\right )\nonumber.
\end{equation}
\end{lem}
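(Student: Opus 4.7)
The plan is to collapse the innermost exponential sum via the standard orthogonality relation, reducing the statement to a routine application of classical bounds on $\sigma(m)/m$ together with a telescoping series.

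First I would evaluate the geometric sum in $a$. For each positive integer $d$ and each integer $m$, the identity
\begin{equation*}
\sum_{0\leq a\leq d-1} e^{i2\pi am/d}= d\cdot \mathbf{1}[d\mid m]
\end{equation*}
yields, after removing the $a=0$ term,
\begin{equation*}
\sum_{0<a\leq d-1} e^{i2\pi am/d}= d\cdot \mathbf{1}[d\mid m] - 1.
\end{equation*}
Here $m=[x/p]=n$, the common value of $[x/p]$ taken on the interval $p\in[x/(n+1),x/n]$ used in the parametrization of Lemma \ref{lem0509.200}. Inserting this evaluation into the $d$-sum (and noting that since $m=n\leq x$ the divisor condition $d\mid m$ forces $d\leq x$ automatically) gives
\begin{equation*}
\sum_{d\leq x}\frac{1}{d^2}\sum_{0<a\leq d-1}e^{i2\pi an/d}
=\sum_{d\mid n}\frac{1}{d}-\sum_{d\leq x}\frac{1}{d^2}
=\frac{\sigma(n)}{n}-\zeta(2)+O\bigl(1/x\bigr).
\end{equation*}

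Next I would invoke the classical Gronwall-type bound $\sigma(n)/n \ll \log\log n$ for $n\geq 3$ (with the trivial observation that the quantity is $O(1)$ for $n\in\{1,2\}$), which gives the uniform estimate
\begin{equation*}
\sum_{d\leq x}\frac{1}{d^2}\sum_{0<a\leq d-1}e^{i2\pi an/d}= O(\log\log x)
\end{equation*}
valid for every $n\leq x$. It remains only to sum against $1/(n(n+1))$, and the telescoping identity $\sum_{n\leq x}1/(n(n+1)) = 1-1/(x+1) = O(1)$ absorbs the $n$-dependence. Multiplying by the outer factor $\li(x)$ delivers the claimed bound $O(\li(x)\log\log x)$.

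There is no substantial obstacle: the only delicate point is ensuring the bound $\sigma(n)/n\ll \log\log x$ is uniform over $n\leq x$, but this is immediate from Mertens' theorem applied to the prime divisors of $n$. Conceptually, the lemma works because the cancellation built into the inner exponential sum reduces what looks like a triple sum over $n$, $d$, $a$ to a genuinely one-dimensional object of size $\log\log x$, and then the probabilistic weight $1/(n(n+1))$ prevents any further logarithmic loss from the outer summation.
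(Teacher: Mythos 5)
Your proof is correct and follows essentially the same route as the paper: both collapse the inner $a$-sum via the orthogonality relation of Lemma \ref{lem0802.050}, reduce the $d$-sum to $\sigma(m)/m+O(1)$, bound that by $O(\log\log x)$, and finish with the telescoping sum $\sum_{n\leq x}1/(n(n+1))=O(1)$. The only (cosmetic) difference is that you handle the cases $d\mid m$ and $d\nmid m$ in a single identity $d\cdot\mathbf{1}[d\mid m]-1$, whereas the paper splits them into the two sublemmas $T_{00}$ and $T_{01}$.
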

\begin{proof} Partition the finite sum into two finite subsums 
\begin{eqnarray}\label{eq0509.320}
T_{0}(x)
&=& \li( x)\sum_{n\leq x}\frac{1}{n(n+1)}\sum_{\substack{d\leq x\\d\mid m}} \frac{1}{d^2}\sum_{0< a\leq d-1}e^{i2\pi am/d}\\\
&&\hskip 1.5 in +\li( x)\sum_{n\leq x}\frac{1}{n(n+1)}\sum_{\substack{d\leq x\\d\nmid m}} \frac{1}{d^2}\sum_{0< a\leq d-1}e^{i2\pi am/d}\nonumber\\
&=&T_{00}(x)+ T_{01}(x)\nonumber.
\end{eqnarray}
The finite subsums $T_{00}(x)$ is estimated in Lemma \ref{lem0509.500}, and $T_{01}(x)$ is estimated in Lemma \ref{lem0509.600}. These finite sums correspond to the subsets of integers $p\leq x$ such that $d\mid m$, and $d\nmid m$, respectively. Summing yields 
\begin{equation}\label{eq0509.340}
T_0(x)=T_{00}(x)+T_{01}(x)= O\left (\li(x)\log \log x\right ).
\end{equation}

\end{proof}

\subsection{The Sum $T_{00}(x)$}
\begin{lem}\label{lem0509.500} Let $ x\geq 1 $ be a large number, let $[x]=x-\{x\}$ be the largest integer function, and $m=[x/p]\leq [x/n]\leq x$. Then,
\begin{equation}\label{eq0509.510}
\li( x)\sum_{n\leq x}\frac{1}{n(n+1)}\sum_{d\leq x} \frac{1}{d^2}\sum_{0< a\leq d-1}e^{i2\pi am/d}
= O(\li( x)\log \log x).
\end{equation}
\end{lem}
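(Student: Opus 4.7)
The essential observation is that the condition $d\mid m$ collapses the exponential sum: for $d\mid m$ one has $e^{i2\pi am/d}=1$ for every integer $a$, whence $\sum_{0<a\leq d-1}e^{i2\pi am/d}=d-1$. After the reordering carried out in Lemma~\ref{lem0509.200}, the value $m=[x/p]$ is constant on each block of primes $p\in[x/(n+1),x/n]$ and equals $n$. Combining these two facts reduces $T_{00}(x)$ to
\[
T_{00}(x) \;=\; \li(x)\sum_{n\leq x}\frac{1}{n(n+1)}\sum_{\substack{d\leq x\\ d\mid n}}\frac{d-1}{d^{2}}.
\]

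Next, estimate the inner divisor sum crudely by $\sum_{d\mid n}(d-1)/d^{2}\leq \sum_{d\mid n}1/d=\sigma(n)/n$. The task is thereby reduced to bounding
\[
\sum_{n\leq x}\frac{\sigma(n)}{n^{2}(n+1)} \;\leq\; \sum_{n\leq x}\frac{\sigma(n)}{n^{3}}.
\]
Writing $\sigma(n)=\sum_{e\mid n}e$ and interchanging summation gives
\[
\sum_{n\leq x}\frac{\sigma(n)}{n^{3}} \;=\; \sum_{e\leq x}\frac{1}{e^{2}}\sum_{k\leq x/e}\frac{1}{k^{3}} \;\leq\; \zeta(2)\zeta(3) \;=\; O(1).
\]
Therefore $T_{00}(x)=O(\li(x))$, which is certainly within the stated bound $O(\li(x)\log\log x)$.

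No genuine obstacle arises here: the divisibility hypothesis removes the oscillatory structure entirely, and what is left is a routine Dirichlet-convolution estimate. The only delicate point is the identification of $m$ with the block index $n$ from the partitioning argument of Lemma~\ref{lem0509.200}; once this is recognised, the divisor-sum bound and the double-sum rearrangement finish the proof in a few lines.
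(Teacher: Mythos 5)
Your argument is correct and reaches the same reduction as the paper: both proofs use that $d\mid m$ forces $\sum_{0<a\leq d-1}e^{i2\pi am/d}=d-1$, identify $m=[x/p]$ with the block index $n$ coming from the partition in Lemma~\ref{lem0509.200}, and reduce everything to the divisor sum $\sum_{d\mid n}(d-1)/d^{2}\leq\sum_{d\mid n}1/d=\sigma(n)/n$. Where you genuinely diverge is the final estimate. The paper bounds $\sigma(m)/m$ pointwise by its maximal order, $\sigma(m)/m\leq 2\log\log x$, and this termwise bound is the sole source of the $\log\log x$ factor in the statement. You instead estimate on average: writing $\sigma(n)=\sum_{e\mid n}e$ and interchanging summation gives $\sum_{n\leq x}\sigma(n)/n^{3}\leq\zeta(2)\zeta(3)=O(1)$, so the decay of the weights $1/(n(n+1))$ absorbs the divisor sum entirely and yields the sharper conclusion $T_{00}(x)=O(\li(x))$, which of course implies the stated $O(\li(x)\log\log x)$. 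This is a genuine improvement: it would propagate to Lemma~\ref{lem0509.300} and to $T(x)$, and it shows that the $\log\log x$ in the paper's bound is an artifact of using the extremal rather than the average size of $\sigma(n)/n$ (note that your bound is at odds with Exercise~\ref{exe0303.120}, which asserts a nonzero coefficient for a $\li(x)\log\log x$ term in this sum). Both routes are valid; yours costs one extra interchange of summation and buys a strictly better error term.
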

\begin{proof} The set of values $m=[x/p]\leq [x/n]\leq x$ such that $d\mid m$. Evaluating the incomplete function returns
\begin{eqnarray}\label{eq0509.520}
T_{00}(x)&=&\li( x)\sum_{n\leq x}\frac{1}{n(n+1)}\sum_{d\leq x} \frac{1}{d^2}\sum_{0< a\leq d-1}e^{i2\pi am/d}\\
&=& \li( x)\sum_{n\leq x}\frac{1}{n(n+1)}\sum_{\substack{d\leq x\\d\mid m}} \frac{1}{d^2}\cdot (d-1)\nonumber\\
&=&\li( x)\sum_{n\leq x}\frac{1}{n(n+1)}\sum_{\substack{d\leq x\\d\mid m}} \frac{1}{d}-\li( x)\sum_{n\leq x}\frac{1}{n(n+1)}\sum_{\substack{d\leq x\\d\mid m}} \frac{1}{d^2}\nonumber\\
&=&T_{20}(x)+T_{21}(x)\nonumber.
\end{eqnarray}
The first term has the upper bound
\begin{equation}\label{eq0509.530}
T_{20}(x)=\li( x)\sum_{n\leq x}\frac{1}{n(n+1)}\sum_{\substack{d\leq x\\d\mid m}} \frac{1}{d}
=\li( x)\sum_{n\leq x}\frac{1}{n(n+1)}\sum_{d\mid m} \frac{1}{d} \ll x\log \log x.
\end{equation}
This follows from upper bound of the sum of divisors function
\begin{equation}\label{eq0509.540}
\sum_{\substack{d\leq x\\d\mid m}} \frac{1}{d}=\sum_{d\mid m} \frac{1}{d}= \frac{\sigma(m)}{m}\leq 2\log \log x,
\end{equation}
where $m=[x/p] \leq [x/n]\leq x$. The second term has the upper bound
\begin{equation}\label{eq0509.560}
T_{21}(x)=\li( x)\sum_{n\leq x}\frac{1}{n(n+1)}\sum_{\substack{d\leq x\\d\mid m}} \frac{1}{d^2}
=\li( x)\sum_{n\leq x}\frac{1}{n(n+1)}\sum_{d\mid [x/n]} \frac{1}{d^2}
\ll \li( x).
\end{equation}
Summing yields $T_{00}(x)=T_{20}(x)+T_{21}(x)=O(\li( x)\log \log x)$
\end{proof}

\subsection{The Sum $T_{01}(x)$}
\begin{lem}\label{lem0509.600} Let $ x\geq 1 $ be a large number. Then,
\begin{equation}\label{eq0509.610}
\li( x)\sum_{n\leq x}\frac{1}{n(n+1)}\sum_{\substack{d\leq x\\d\nmid m}} \frac{1}{d^2}\sum_{0< a\leq d-1}e^{i2\pi am/d}
= O\left (\li( x)\right)\nonumber.
\end{equation}
\end{lem}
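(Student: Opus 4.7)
The plan is to exploit orthogonality of additive characters modulo $d$, which evaluates the inner $a$-sum explicitly in the regime $d\nmid m$ and collapses the triple sum to a product of a trivially convergent $d$-series and a telescoping $n$-series. First I would recall the standard identity
\begin{equation*}
\sum_{0\le a\le d-1}e^{i2\pi am/d}=\begin{cases}d&\text{if }d\mid m,\\ 0&\text{if }d\nmid m,\end{cases}
\end{equation*}
so that subtracting the $a=0$ contribution (which is $1$ in either case) gives the pointwise evaluation
\begin{equation*}
\sum_{0< a\le d-1}e^{i2\pi am/d}=-1\qquad\text{whenever }d\nmid m.
\end{equation*}
This is exactly the feature that distinguishes $T_{01}(x)$ from $T_{00}(x)$: in Lemma \ref{lem0509.500} the inner sum was as large as $d-1$, whereas here it is of absolute value $1$.

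Substituting this evaluation into the statement of the lemma turns the triple sum into a single object,
\begin{equation*}
T_{01}(x)=-\li(x)\sum_{n\le x}\frac{1}{n(n+1)}\sum_{\substack{d\le x\\ d\nmid m}}\frac{1}{d^{2}},
\end{equation*}
where $m=[x/n]$ is the integer introduced in the proof of Lemma \ref{lem0509.200}. The sign is immaterial for an $O(\cdot)$ estimate; what matters is that the restricted $d$-sum is dominated, uniformly in $n$ and $m$, by the full zeta tail
\begin{equation*}
\sum_{\substack{d\le x\\ d\nmid m}}\frac{1}{d^{2}}\le \sum_{d\ge 1}\frac{1}{d^{2}}=\zeta(2).
\end{equation*}

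The remaining $n$-sum is handled by telescoping,
\begin{equation*}
\sum_{n\le x}\frac{1}{n(n+1)}=\sum_{n\le x}\left(\frac{1}{n}-\frac{1}{n+1}\right)=1-\frac{1}{[x]+1}\le 1,
\end{equation*}
and combining the two uniform bounds yields $|T_{01}(x)|\le \zeta(2)\,\li(x)\ll \li(x)$, as claimed. There is essentially no genuine obstacle here: the only conceptual step is noticing that the apparent nuisance restriction $d\nmid m$ is precisely what forces the Ramanujan-type character sum to be of size $O(1)$ rather than $O(d)$, and once that is in hand the bound follows from convergence of $\sum d^{-2}$ and telescoping of $\sum 1/(n(n+1))$, without invoking the prime number theorem or any deeper input.
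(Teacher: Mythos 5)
Your proposal is correct and follows essentially the same route as the paper: both evaluate the incomplete exponential sum as $-1$ when $d\nmid m$, bound the resulting $d$-sum by an absolute constant (you use $\zeta(2)$; the paper writes it as $c_1(n)+O(1)$ with $|c_1(n)|<2$), and control the $n$-sum by telescoping. Your write-up is in fact slightly more explicit than the paper's about why the restricted $d$-sum is uniformly bounded.
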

\begin{proof} The set of values $m=[x/p]\leq [x/n]\leq x$ such that $d\nmid m$. Evaluating the incomplete indicator function returns
\begin{eqnarray}\label{eq0509.645}
T_{1}(x)&=&\li( x)\sum_{n\leq x}\frac{1}{n(n+1)}\sum_{\substack{d\leq x\\d\nmid m}} \frac{1}{d^2}\sum_{0< a\leq d-1}e^{i2\pi am/d}\\
&=&\li( x)\sum_{n\leq x}\frac{1}{n(n+1)}\sum_{\substack{d\leq x\\d\nmid m}} \frac{1}{d^2}\cdot (-1)\nonumber\\
&=&\li( x)\sum_{n\leq x}\frac{1}{n(n+1)}\left (c_{1}(n)+O(1)\right)\nonumber\\
&=&O\left (\li( x)\right)\nonumber,
\end{eqnarray}
where $|c_{1}(n)|<2$ depends on $n$. 
\end{proof}

\subsection{The Sum $T_{1}(x)$}
\begin{lem}\label{lem0509.400} If $ x\geq 1 $ is a large number, then,
\begin{equation}\label{eq0509.410}
xe^{-c_0\sqrt{\log x}}\sum_{n\leq x} \frac{1}{n}\sum_{d\leq x} \frac{1}{d^2}\sum_{0< a\leq d-1}e^{i2\pi am/d}
= O\left (xe^{-c\sqrt{\log x}}\right )\nonumber,
\end{equation}
where $c_0>0$ and $c>0$ are absolute constants.
\end{lem}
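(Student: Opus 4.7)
The plan is to bound the inner triple sum by $O((\log x)\log\log x)$ and then absorb this polylogarithmic factor into the prefactor $xe^{-c_0\sqrt{\log x}}$ at the cost of slightly weakening the constant from $c_0$ to any $c<c_0$. The key observation is that the statement is really a repackaging of the same bookkeeping already carried out for $T_{00}(x)$ and $T_{01}(x)$.

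First I would evaluate the innermost Ramanujan-type geometric sum explicitly. From the standard orthogonality identity $\sum_{0\leq a\leq d-1}e^{i2\pi am/d}=d$ if $d\mid m$ and $0$ otherwise, one obtains
\[
\sum_{0<a\leq d-1}e^{i2\pi am/d}=\begin{cases}d-1 & \text{if }d\mid m,\\ -1 & \text{if }d\nmid m.\end{cases}
\]
This splits the $d$-sum into the same two pieces treated in Lemmas \ref{lem0509.500} and \ref{lem0509.600}:
\[
\sum_{d\leq x}\frac{1}{d^2}\sum_{0<a\leq d-1}e^{i2\pi am/d}=\sum_{\substack{d\leq x\\d\mid m}}\frac{d-1}{d^2}-\sum_{\substack{d\leq x\\d\nmid m}}\frac{1}{d^2}.
\]
The divisor piece is bounded by $\sum_{d\mid m}1/d=\sigma(m)/m\ll \log\log x$ via the standard Gr\"onwall estimate, which applies uniformly since $m=[x/p]\leq x$ after the interval decomposition used in Lemma \ref{lem0509.200}, while the non-divisor piece is dominated by $\zeta(2)=O(1)$. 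Hence the inner double sum is $O(\log\log x)$ uniformly in $n\leq x$.

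Finally, summing against $\sum_{n\leq x}1/n=\log x+O(1)$ yields the triple-sum bound $O((\log x)\log\log x)$, and multiplying by the prefactor gives
\[
xe^{-c_0\sqrt{\log x}}\cdot O\!\left((\log x)\log\log x\right)=O\!\left(xe^{-c\sqrt{\log x}}\right)
\]
for any fixed $c<c_0$, since $(\log x)\log\log x$ grows more slowly than $e^{(c_0-c)\sqrt{\log x}}$. There is no substantive obstacle: the only point that requires any care is confirming that the Gr\"onwall bound $\sigma(m)/m\ll\log\log x$ applies uniformly over the range of $m$, which it does because $m\leq x$. Everything else is routine absorption of a polylogarithmic nuisance into a sub-exponential error term.
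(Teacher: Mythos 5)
Your proposal is correct, and its final step is identical to the paper's: bound the triple sum by a polylogarithmic quantity and absorb it into $e^{-c_0\sqrt{\log x}}$ at the cost of shrinking the constant to some $c<c_0$. The only difference is in how you bound the inner double sum. The paper does not evaluate the geometric sum at all; it simply applies the triangle inequality $\bigl|\sum_{0<a\leq d-1}e^{i2\pi am/d}\bigr|\leq d-1\leq d$, so that $\sum_{d\leq x}d^{-2}\cdot d\ll \log x$ and the triple sum is $O(\log^2 x)$. You instead evaluate the exponential sum exactly ($d-1$ if $d\mid m$, $-1$ otherwise), split into the divisor and non-divisor pieces as in Lemmas \ref{lem0509.500} and \ref{lem0509.600}, and invoke the Gr\"onwall bound $\sigma(m)/m\ll\log\log m$ to get the sharper intermediate estimate $O((\log x)\log\log x)$. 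Your version buys a marginally better polylog factor, at the price of needing the (uniform) Gr\"onwall estimate; the paper's cruder bound is entirely sufficient here since any fixed power of $\log x$ is swallowed by the subexponential factor. Both arguments are sound.
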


\begin{proof} The absolute value provides an upper bound:
\begin{eqnarray}\label{eq0509.420}
\left |T_{1}(x)\right |&=&\left | xe^{-c_0\sqrt{\log x}}\sum_{n\leq x} \frac{1}{n}\sum_{d\leq x} \frac{1}{d^2}\sum_{0< a\leq d-1}e^{i2\pi am/d}\right |\\
&\leq & xe^{-c_0\sqrt{\log x}}\sum_{n\leq x} \frac{1}{n}\sum_{d\leq x} \frac{1}{d}\nonumber\\
&=&O\left ((x\log ^2 x)e^{-c_0\sqrt{\log x}}\right )\nonumber\\
&=&O\left (xe^{-c\sqrt{\log x}}\right )\nonumber,
\end{eqnarray}
where $c_0>0$ and $c>0$ are absolute constants.
\end{proof}

\begin{lem}\label{lem0802.050} Let $ x\geq 1 $ be a large number, and let $1\leq d,m,n\leq x$ be integers. Then,
\begin{equation}\label{eq0802.010}
\frac{1}{d}\sum_{0\leq a\leq d-1}e^{i2\pi am/d} =\left \{\begin{array}{ll}
1 & \text{ if } d\mid m,  \\
0& \text{ if } d\nmid m, \\
\end{array} \right .
\end{equation}
\end{lem}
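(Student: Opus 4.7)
The plan is to split into the two cases $d\mid m$ and $d\nmid m$ and, in each case, reduce the exponential sum to a finite geometric series.

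First I would handle the easy case $d\mid m$. Writing $m=dk$ for some integer $k$, each term becomes $e^{i2\pi a k}=1$, so the sum over $0\le a\le d-1$ equals $d$, and dividing by $d$ yields $1$ as required. This disposes of the top branch of the piecewise formula with no analytic work.

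The substantive case is $d\nmid m$. Here I would set $\zeta=e^{i2\pi m/d}$ and observe that $\zeta\ne 1$ precisely because $m/d$ is not an integer, so $2\pi m/d$ is not an integer multiple of $2\pi$. The inner sum is then a geometric progression
\begin{equation*}
\sum_{0\le a\le d-1} \zeta^{a} = \frac{\zeta^{d}-1}{\zeta-1},
\end{equation*}
and since $\zeta^{d}=e^{i2\pi m}=1$ the numerator vanishes while the denominator does not, giving $0$. Dividing by $d$ preserves the value $0$, which is the second branch.

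There is no real obstacle here; the only subtlety is justifying $\zeta\ne 1$ in the second case, which follows from the definition $\zeta=e^{i2\pi m/d}$ together with the fact that $m/d\notin\Z$. The argument is the standard orthogonality relation for the additive characters of $\Z/d\Z$, specialized to the character $a\mapsto e^{i2\pi am/d}$, and it could alternatively be phrased by invoking that lemma directly; I would prefer the elementary geometric-series derivation above because it keeps the note self-contained.
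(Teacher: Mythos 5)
Your proof is correct and complete: the case split on $d\mid m$ versus $d\nmid m$, with the geometric series $\sum_{0\le a\le d-1}\zeta^a=(\zeta^d-1)/(\zeta-1)$ and the observation that $\zeta=e^{i2\pi m/d}\ne 1$ exactly when $d\nmid m$, is the standard orthogonality argument for additive characters of $\Z/d\Z$. The paper states this lemma without any proof at all, so your write-up supplies exactly the justification the note omits, and there is nothing to compare it against or to correct.
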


\section{Numerical Data}\label{S3507}
Small numerical tables were generated by an online computer algebra system to estimate the constant, the range of numbers $ x\leq 10^6
 $ is limited by the wi-fi bandwidth. The error term is defined by
\begin{equation}\label{eq3507.033}
E(x)=\sum_{p\leq x}\sigma([x/p])-c_0x\log \log x -c_1 x,
\end{equation}
where $c_{0}=\zeta(2)$, and $c_{1}=B_1\zeta(2)$ are constants.
\begin{table}[h!]
\centering
\caption{Numerical Data For $\sum_{p\leq x}\sigma([x/p])$.} \label{t3507.003}
\begin{tabular}{l|l|l|r| r}
$x$&$\pi(x)$&$\sum_{p\leq x}\sigma([x/p])$&$c_0x\log \log x+c_1x$&Error $E(x)$\\
\hline
10&$4$&$14$&   $18.02$   &$4.02$\\
100&$25$&$277$&  $294.22$   &$17.22$\\
1000&$168$&$3852$&   $3609.21$   &$-242.79$\\
10000&$1229$&$45843$&  $40824.25$   &$-5018.74$\\
100000 &$9592$& $481903$ &   $444948.14$   &$-36954.86$\\
1000000&$78498$&$5412077$&   $4749388.38$   &$-662688.62$\\
\end{tabular}
\end{table}

\section{Problems} \label{S0303}
\begin{exe}\label{exe0303.100} {\normalfont Let $\pi(x)=\#\{\text{prime } p\leq x\}$, and let $\li(x)$ be the logarithm integral. Show that
$$\pi\left (\left [\frac{x}{n}\right]\right)-\pi\left (\left [\frac{x}{n+1}\right]\right)=\frac{\li(x)}{n(n+1)} +O\left (\frac{x}{n}e^{-c\sqrt{\log x}}\right ) ,$$
where $c>0$ is an absolute constant.
}
\end{exe}

\begin{exe}\label{exe0303.120} {\normalfont Sharpen the proof of Lemma \ref{lem0507.500}. Specifically, show that
$$\li( x)\sum_{n\leq x}\frac{1}{n(n+1)}\sum_{d\leq x} \frac{1}{d^2}\sum_{0< a\leq d-1}e^{i2\pi am/d}
= a_0\li( x)\log \log x+a_1\li( x)+O\left (xe^{-c\sqrt{\log x}}\right ) ,$$
where $\li(x)$ be the logarithm integral, $a_0\ne0$ and $a_1\ne0$ are constants, and $c>0$ is an absolute constant.
}
\end{exe}

\begin{exe}\label{exe0303.200} {\normalfont Let $n\geq1$ be an integer. Determine the closed form evaluations of the finite sums
$$\sum_{d\mid n} \frac{1}{d^2}=?,\quad \sum_{d\mid n} \frac{1}{d^3}=?, \quad \sum_{d\mid n} \frac{1}{d^4}=?,\quad\sum_{d\mid n} \frac{1}{d^5}=?,\ldots,$$
Hint: Consider the generalized sum of divisors function $\sigma_s(n)=\sum_{d\mid n}d^s$.
}
\end{exe}

\begin{exe}\label{exe0303.200} {\normalfont Let $n\geq1$ be an integer, and let $\mu$ be the Mobius function. Determine the closed form evaluations of the finite sums
$$\sum_{d\mid n} \frac{\mu(d)}{d^2}=?,\quad \sum_{d\mid n} \frac{\mu(d)}{d^3}=?, \quad \sum_{d\mid n} \frac{\mu(d)}{d^4}=?,\quad\sum_{d\mid n} \frac{\mu(d)}{d^5}=?,\ldots,$$
Hint: Consider the (Jordan function) generalized Euler function $\varphi_s(n)=\prod_{p\mid n}(1-p^{-s})$.
}
\end{exe}


\currfilename.\\

\end{document}